\newtheorem{theorem}{Theorem}[section]
\algnewcommand\algorithmicswitch{\textbf{switch}}
\algnewcommand\algorithmiccase{\textbf{case}}
\algnewcommand\algorithmicassert{\texttt{assert}}
\algnewcommand\Assert[1]{\State \algorithmicassert(#1)}%
\newcommand{\Tt}{^{{\mbox{\tiny \bf \sf T}}}}
\def\dx{\dot{x}}
\def\dq{\dot{q}}
\newcommand{\setR}{\mathbb{R}}
\newcommand{\J}{{\cal{J}}}
\newcommand{\K}{{\cal{K}}}
\newcommand{\bigO}{{\cal{O}}}
\renewcommand{\L}{{\cal{L}}}
\newcommand{\bJ}{{\mathbf{J}}}
\newcommand{\bK}{{\mathbf{K}}}
\newenvironment{smallarray}[1]
 {\null\,\vcenter\bgroup\scriptsize
  \arraycolsep=.13885em
  \hbox\bgroup$\array{@{}#1@{}}}
 {\endarray$\egroup\egroup\,\null}
\let\oldReturn\Return
\renewcommand{\Return}{\State\oldReturn}
\title{\LARGE \bf
Revised SCLP-simplex Algorithm with Application to Large-Scale Fluid Processing Networks
}
\author{Evgeny Shindin, Michael Masin, Gideon Weiss and Alexander Zadorojniy 
\thanks{Research of G. Weiss funded by
ISF Grants 249/02, 454/05, 711/09 and 286/13.  
Research of M. Masin, E. Shindin and A. Zadorojniy funded by the EU
Commission’s H2020 Program under grant agreements No 732105 and No 780788.}
\thanks{E. Shindin is with IBM Research - Haifa, Mount Carmel, Haifa, 3498825, Israel
        {\tt\small evgensh@il.ibm.com}}%
\thanks{M. Masin is with  Optibus, Eder 48a, Haifa, 3475293, Israel
        {\tt\small michael.masin@optibus.com}}%
\thanks{G. Weiss is with Department of Statistics, University of Haifa, Mount Carmel, Haifa, 3498838, Israel
        {\tt\small gweiss@stat.haifa.ac.il}}%
\thanks{A. Zadorojniy is with IBM Research - Haifa, Mount Carmel, Haifa, 3498825, Israel
        {\tt\small zalex@il.ibm.com}}%
}
\begin{document}

\maketitle
\thispagestyle{empty}
\pagestyle{empty}

\begin{abstract}

We describe an efficient implementation of a recent simplex-type algorithm for the exact solution of separated continuous linear programs, and compare it with linear programming approximation of these problems obtained via discretization of the time horizon.  The implementation overcomes many numerical pitfalls often neglected in theoretical analysis allowing better accuracy or acceleration up to several orders of magnitude both versus previous implementation of the simplex-type algorithms and versus a state-of-the-art LP solver using discretization.  Numerical study includes medium, large, and very large examples of scheduling problems and problems of control of fluid processing networks. We discuss online and offline optimization settings for various applications and outline future research directions. 

\end{abstract}

\section{Introduction} 
\label{sec.introduction}
In this paper we present an implementation and evaluate the performance of a simplex-type algorithm for the solution of a separated continuous linear programming problem (SCLP):
\begin{equation}
\begin{array}{ll}
\label{eqn.PWSCLP}
\displaystyle \max_{u(t), x(t)} & \int_0^T (\gamma\Tt + (T-t)c\Tt) u(t) + d\Tt x(t) \,dt,   \nonumber \\
 \mbox{s.t.} &  \int_0^t G\, u(s)\,ds  + F x(t) \le \alpha + a t, \\
 & \quad\; H u(t) \le b, \nonumber \\
& \quad x(t), u(t)\ge 0, \quad 0\le t \le T,  \nonumber
\end{array}
\end{equation}

SCLP problems are a special case of continuous linear programs (CLP) formulated by Bellman \cite{bellman:53}, and were first suggested for the solution of job shop scheduling problems by Anderson \cite{anderson:81}.  Many important problems can be formulated as SCLP's, but up to date these problems were always solved by discretizing the time. 
The simplex-type algorithm (SCLP-simplex)  studied here was derived by Weiss \cite{weiss:08}, and in this paper we present the first streamlined stable and efficient implementation of this algorithm and compare  its performance to the discretized LP approximation.

 SCLP-simplex  has several important advantages: 
\begin{compactitem}[-]
\item
SCLP shares many of the properties of standard LP:  it has a symmetric dual, it satisfies strong duality, solutions are obtained at extreme points, and extreme points are characterized by a combinatorial analog of basic solutions, with a well defined pivot operation.
\item
SCLP-simplex  solves SCLP in a finite number of iterations, exactly, using a parametric approach similar to Lemke's algorithm \cite{lemke-Hobson:64}.
\item
The form of the SCLP-simplex solution lends itself to perform sensitivity analysis.
\item
The SCLP-simplex  can be implemented  as a model predictive control 
for online long term optimization.
\end{compactitem}
These can be contrasted to some disadvantages of time discretization:
\begin{compactitem}[-]
\item
The time discretized LP are large and may require substantial computational effort.
\item
The solution of the discretized  approximation of SCLP may be inaccurate.  To be accurate some regions of the time horizon need only rough discretization, while others require a very fine discretization.  However  it is difficult to tell where these regions are, so the quality of the approximation is very uncertain.
\item
The structure of the optimal solution is lost in the discretized solution with many spurious basic variables that reduce the value of sensitivity  analysis, and make it practically unsuitable for model predictive control, where a new discretized problem needs to be solved from scratch in every update.
\end{compactitem}

Despite the advantages of the SCLP-simplex  of \cite{weiss:08} it has so far only 
been implemented by the author as a trial pilot aimed to verify the algorithm on very small examples.  While it received many citations referring to the theoretical results,  it has never been used in practice.  

Our contribution in this paper is a revised SCLP-simplex, that is taking advantage of several computational techniques, and is the first stable and efficient implementation of the algorithm.  
As a result,  the revised SCLP-simplex  often outperforms the discretized method, both in computation time as well as solution quality: for some large problems the discretization method fails to complete the calculation with reasonable accuracy, while SCLP-simplex solves these problems in a reasonable time, with perfect accuracy.

The rest of this paper is structured as follows:  Section \ref{sec.background} provides the structure of optimal solutions and briefly describes the SCLP-simplex algorithm.  
Section \ref{sec.implementation} discusses details of our revised implementation. Section \ref{sec.setup} describes the potential applications and sets the experimental setup, followed by a computational study in Section \ref{sec.results}.  
Section \ref{sec.impact} summarizes the work and discusses future directions.

\section{Background on SCLP}
\label{sec.background}
The main relevant SCLP references to our approach are \cite{bellman:53,anderson:81,anderson-nash:87,pullan:93,pullan:96duality,pullan:95forms,pullan:00convergence}.  Additional approaches are shown in
\cite{luo-bertsimas:98,fleischer-sethuraman:05,bampou-kuhn:12}.  The SCLP-simplex algorithm is based on \cite{weiss:08}, several extensions and generalizations are shown in \cite{wang-yao-zhang:09,shapiro:01,shindin-weiss:14duality,shindin-weiss:15form,shindin-weiss:18simplex}. 
In this section we describe the structure of optimal solutions
%

In (\ref{eqn.PWSCLP}), the matrices $G,H,F$ are $K\times J$, $I\times J$, $K\times L$ dimensional respectively, and we number the slacks $x_1,\ldots,x_K$, and $u_{J+1},\ldots,u_{J+I}$. Denote by  $\bK = (1,\ldots,K+L)$  the indexes of the primal state variables $x_k(t)$ and by $\bJ=(1,\ldots,J+I)$  the  indexes of the primal control variables $u_j(t)$.  
The symmetric dual to (\ref{eqn.PWSCLP}) is 
\begin{equation}
\begin{array}{ll}
\label{eqn.DWSCLP1}
\displaystyle \min_{p(t),q(t)} & \int_0^T (\alpha\Tt + (T-t)a\Tt) p(t) + b\Tt q(t) \,dt,    \nonumber  \\
\mbox{s.t.} &  \int_0^t  G\Tt\, p(s)\,ds + H\Tt q(t) \ge \gamma + c t, \\
 & \quad\; F\Tt p(t) \ge d, \nonumber \\
& \quad q(t), p(t)\ge 0, \quad 0\le t \le T,   \nonumber
\end{array}
\end{equation}
with dual state variables, including slacks,  $q_j(t),\,j\in \bJ$ and dual control variables $p_k(t),\,k\in\bK$.  
Note that {\em the dual problem runs in reversed time.} Complementary slackness is defined by:
\begin{equation}
\label{eqn.compslack}
\textstyle
\int_0^T x(t)\Tt p(T-t) dt = \int_0^T u(t)\Tt q(T-t) dt=0
\end{equation}

Under easily checked feasibility and boundedness conditions, and under  non-degeneracy,  SCLP has a unique strongly dual solution. 
The optimal solution has   piecewise constant primal and dual controls and continuous piecewise linear primal and dual state variables,   with breakpoints $0=t_0<t_1<\cdots <t_N=T$.   The solution is then fully described by the breakpoints, by the initial state values $x(0)=x^0,\,q(0)=q^N$,  and by the values of the controls and of the derivatives of the states $u_j^n=u_j(t)\,,p_k^n=p_k(T-t)$, $\dx_k^n=\dx_k(t),\,\dq_j^n=\dq_j(T-t)$ for $t_{n-1}<t<t_n,\,n=1,\ldots,N$.
The values of the primal and dual states at the breakpoints are $x_k^n=x_k(t_n),\,q_j^n=q_j(T-t_n),\,n=0,\ldots,N$.
The initial values, $x^0,q^N$, are optimal solutions of the {\em Boundary-LP}:
\begin{equation}
\label{eqn.boundary}
\begin{array}{lll}
\max \quad    [0 \; d\Tt] x^0,  &\quad&  \min  \quad   [b\Tt \; 0] q^N,   \\
\mbox{s.t.}  \quad     [I \; F] x^0 = \alpha,  &&  \mbox{s.t.}  \quad     [H\Tt  -I] q^N = \gamma,   \\
\qquad \quad  x^0 \ge 0,   &&   \quad \qquad  q^N\ge 0.  \\
\end{array}
\end{equation}
with $\K_0$, $\J_{N+1}$ the indexes of the basic variables $x_k^0, q_j^N$.
Note:  the LP for $x^0,q^N$ does not involve $T$, so the boundary values are the same for all time horizons.

Values of the controls and slopes of states in the intervals are complementary slack basic solutions of the primal and dual {\em Rates-LP}$(\K,\J)$:
 \begin{equation}
\label{eqn.rates1}
\arraycolsep=2pt
\begin{array}{cc}
\begin{array}{ll}
\max &  [c\Tt\;0] u +  [0\;d\Tt] \dx    \\
\mbox{s.t.}& [G\;0] u + [I\;F] \dx = a,   \\
  &   [H\;I] u \quad \quad \quad  = b,  
\end{array}
&\quad
\begin{array}{l}
 \dx_k \in \setR \;\forall k \in \K,\\  \dx_k \in \setR^+ \;\forall k \notin \K, \\
 u_j = 0 \;\forall j \in \J,\\  u_j \in \setR^+ \;\forall j \notin \J,
 \end{array}
 \end{array}
\end{equation}
 \begin{equation}
\label{eqn.rates2}
\arraycolsep=2pt
\begin{array}{cc}
\begin{array}{cl}
\min&   [a\Tt\;0] p + [0\;b\Tt] \dq   \\
\mbox{s.t.}&  [G\Tt\;0] p + [H\Tt\;\mbox{-}I]\dq = c ,   \\
&   [F\Tt\;{\mbox{-}I}] p \quad \quad \quad   = d,
\end{array}
&\quad
\begin{array}{l}
\dq_j \in \setR \;\forall j \in \J,\\  \dq_j \in \setR^+ \;\forall j \notin \J, \\
p_k = 0 \;\forall k \in \K,\\  p_k \in \setR^+\; \forall k \notin \K,  
\end{array}
\end{array}
\end{equation}
where for interval $(t_{n-1},t_n)$ the primal basis is $B_n=\{u^n_j,\dx^n_k :  j\not\in\J_n,k\in\K_n\}$
with complementary dual basis $B^*_n =\{p^n_k,\dq^n_j : k\not\in\K_n,j\in\J_n\}$.

The bases 
have the following properties:  
\newline -- Compatibility to the boundary:  $\K_0 \subseteq \K_1$, $\J_{N+1} \subseteq \J_N$.
\newline -- Adjacency:  $B_n,B_{n+1}$ are adjacent:  in the pivot $B_n \to B_{n+1}$ a single basic variable $v^n$ leaves the basis and a single basic variable $w^n$ enters.

The breakpoints $t_1,\ldots,t_{N-1}$ are determined by the following equations for the interval lengths $\tau_n=t_n - t_{n\mbox{-}1}$:
\begin{equation}
\label{eqn.breakponts}
\arraycolsep=2pt
\begin{array}{ll} 
x_k(t_n) =  x_k^0 + \sum_{m=1}^n \dx_k^m \tau_m = 0  &  \mbox{if $v^n=\dx_k$}, \\
q_j(T-t_n) = q_j^N + \sum_{m=N}^{n+1}  \dq_j^m \tau_m =0 &     \mbox{if $v^n=u_j$}, \\
\tau_1+\cdots+\tau_N = T. &
\end{array}
\end{equation}
The remaining values are determined by:
\begin{equation}
\label{eqn.othervalues}
\begin{array}{l} 
x_k(t_n) =  x_k^0 + \sum_{m=1}^n \dx_k^m \tau_m,  \\
q_j(T-t_n) = q_j^N + \sum_{m=N}^{n+1}  \dq_j^m \tau_m. 
\end{array}
\end{equation}
Given a sequence of adjacent bases $B_1,\ldots,B_N$ we can calculate all the controls and slopes of states, the breakpoints, and the values of the primal and dual states at all breakpoints. 
It is an optimal base sequence if: 
\begin{theorem}[\cite{weiss:08}]
\label{thm.optimal}
If a sequence of bases $B_1,\ldots,B_N$ are compatible with $\K_0,\J_{N+1}$ and are adjacent, and if all the values of the primal and dual state variables and the interval lengths determined by equations (\ref{eqn.boundary})--(\ref{eqn.othervalues})  are positive, then this  is an optimal solution of the SCLP.  
\end{theorem}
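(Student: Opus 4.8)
The plan is to show that the candidate solution constructed from the data $B_1,\ldots,B_N$ is primal feasible, dual feasible, and satisfies complementary slackness, and then invoke (weak, hence strong) duality for SCLP to conclude optimality. The construction gives us explicit piecewise-linear state trajectories $x(t),q(t)$ and piecewise-constant controls $u(t),p(t)$ with breakpoints $0=t_0<\cdots<t_N=T$; the content of the theorem is that the algebraic identities (\ref{eqn.boundary})--(\ref{eqn.othervalues}) together with the stated positivity automatically make this a feasible dual pair with zero duality gap.

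\medskip\noindent\textbf{Step 1: Primal feasibility.} On each interval $(t_{n-1},t_n)$ the slopes $\dx^n$ and controls $u^n$ satisfy the equality constraints of Rates-LP$(\K_n,\J_n)$, i.e. $[G\;0]u^n + [I\;F]\dx^n = a$ and $[H\;I]u^n = b$. Integrating the first identity and using $x(t_n)=x(0)+\sum_{m\le n}\dx^m\tau_m$ shows $\int_0^t Gu(s)\,ds + Fx(t) = \alpha + at$ holds with equality on the coordinates carried in the basis and with the correct slack on the others; the second identity gives $Hu(t)\le b$ (with equality where $u_{J+i}$-slacks are basic). The boundary LP (\ref{eqn.boundary}) gives $[I\;F]x^0=\alpha$, i.e. the correct initial condition. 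The nonnegativity $x(t)\ge 0$, $u(t)\ge 0$ on all of $[0,T]$ is where the positivity hypothesis enters: the state variables are piecewise linear, equal to the prescribed positive breakpoint values at the $t_n$ by (\ref{eqn.othervalues}), hence nonnegative throughout by linearity; the controls are the positive Rates-LP values. Continuity of $x$ at the breakpoints is immediate from the telescoping form of (\ref{eqn.othervalues}).

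\medskip\noindent\textbf{Step 2: Dual feasibility.} Symmetrically, on $(t_{n-1},t_n)$ the reversed-time slopes $\dq^n$ and controls $p^n$ solve the equality constraints of Rates-LP$^*(\K_n,\J_n)$, so integrating in reversed time and using (\ref{eqn.othervalues}) for $q$ yields $\int_0^t G\Tt p(s)\,ds + H\Tt q(t) \ge \gamma + ct$ and $F\Tt p(t)\ge d$, again with the positivity hypothesis delivering $q(t),p(t)\ge 0$ on $[0,T]$ and continuity of $q$. The compatibility conditions $\K_0\subseteq\K_1$, $\J_{N+1}\subseteq\J_N$ are what guarantee the boundary bases mesh with the first/last interval bases so that $q(T)$ from (\ref{eqn.boundary}) is consistent with the reversed-time trajectory.

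\medskip\noindent\textbf{Step 3: Complementary slackness and conclusion.} For (\ref{eqn.compslack}) I would argue interval by interval. On $(t_{n-1},t_n)$, in the product $x(t)\Tt p(T-t)$ a coordinate $k$ contributes $0$ whenever $p_k^n=0$, i.e. whenever $k\in\K_n$; when $k\notin\K_n$ the corresponding $\dx_k$ is \emph{not} basic, so it was either forced to $0$ or $x_k$ itself was driven to $0$ at a breakpoint — but in general $x_k(t)$ need not vanish on the whole interval, so the correct statement is the integrated one: the base sequence being built by valid pivots from the boundary means that on each interval exactly the complementary pair $B_n,B^*_n$ is active, and termwise $x_k(t)p_k(T-t)=0$ for a.e.\ $t$ because for each $k$ either $p_k^n\equiv 0$ or $x_k$ is a non-basic slack held at $0$ throughout the interval. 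The same holds for $u(t)\Tt q(T-t)$. Hence both integrals in (\ref{eqn.compslack}) vanish. Finally, weak duality for the primal-dual SCLP pair (\ref{eqn.PWSCLP})--(\ref{eqn.DWSCLP1}) states that any feasible primal value is $\le$ any feasible dual value, with equality iff complementary slackness holds (this is the strong duality / complementary slackness theory of \cite{weiss:08}); since our pair is feasible and complementary slack, it is optimal.

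\medskip\noindent\textbf{Main obstacle.} The delicate point is Step 3 — making the termwise complementary-slackness argument rigorous. It requires showing that the way the intervals are glued (adjacency plus compatibility with the boundary) forces, on each open interval, the \emph{exact} pairing between which primal slacks/controls are at zero and which dual controls are at zero, so that the pointwise products vanish. This is essentially a bookkeeping argument tracking how the index sets $\K_n,\J_n$ evolve under the single pivot $v^n\to w^n$, and verifying that the leaving variable $v^n$ (which hits $0$ exactly at $t_n$ by (\ref{eqn.breakponts})) does not break complementarity on either adjacent interval. Everything else (Steps 1–2) is a routine integration of the Rates-LP identities combined with the positivity hypothesis, plus an appeal to the already-established duality theory.
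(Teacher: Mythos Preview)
The paper does not actually prove Theorem~\ref{thm.optimal}; it merely states the result and defers to \cite{weiss:08}. Your plan---construct the piecewise-linear/piecewise-constant pair, verify primal and dual feasibility from the Rates-LP identities and the positivity hypothesis, verify complementary slackness interval by interval, then invoke weak duality---is exactly the standard argument and is the one carried out in \cite{weiss:08}.

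Your Step~3 wavers unnecessarily. The sentence ``in general $x_k(t)$ need not vanish on the whole interval'' is in fact false in this setting, and you do not need to retreat to an ``integrated'' statement: complementary slackness holds \emph{pointwise}. The clean argument is a short induction on $n$ (forward for $x$, backward for $q$) using precisely the three structural hypotheses. If $k\notin\K_n$ then $\dx_k$ is not in $B_n$, hence $\dx_k^n=0$ and $x_k$ is constant on $(t_{n-1},t_n)$; it remains to show $x_k(t_{n-1})=0$. For $n=1$ this follows from compatibility $\K_0\subseteq\K_1$: $k\notin\K_1$ forces $k\notin\K_0$, so $x_k^0=0$ from the Boundary-LP. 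For the inductive step, either $k\notin\K_{n-1}$ (induction gives $x_k(t_{n-1})=0$), or $k\in\K_{n-1}\setminus\K_n$; by adjacency this means $v^{n-1}=\dx_k$, and then (\ref{eqn.breakponts}) gives $x_k(t_{n-1})=0$ directly. The dual argument for $q_j$ with $j\notin\J_n$ runs backward from $n=N$, using $\J_{N+1}\subseteq\J_N$ for the base case and the $q$-equation in (\ref{eqn.breakponts}) when $v^n=u_j$. With this, $x_k(t)p_k(T-t)=0$ and $u_j(t)q_j(T-t)=0$ hold for every $t$ and every index, and (\ref{eqn.compslack}) follows. Your closing ``main obstacle'' paragraph correctly identifies that this bookkeeping is the crux; the point is simply that it succeeds without qualification.
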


The SCLP-simplex algorithm is similar to the parametric self dual simplex algorithm, also known as Lemke's  \cite{lemke-Hobson:64} 
algorithm, for the solution of standard LP.
In Lemke's algorithm a pair of dual LP's is solved parametrically, starting from an objective of $-1$'s and a r.h.s. of 1's with the trivial optimal solution where the  primal and dual basic variables   are the slacks.  Then it solves all the LP's along the parametric line 
$\L(\theta)=(1-\theta)\left[\begin{smallarray}{c} -1 \\ 1 \end{smallarray}\right] + 
\theta \left[\begin{smallarray}{c} c \\ b \end{smallarray}\right]$.  
The solution partitions $0<\theta_1 <\cdots <\theta_M =1$, and at each $\theta_\ell$ either a primal or a dual variable shrinks to 0, and a single pivot or several pivots are performed to obtain the optimal basis for $\theta > \theta_\ell$.    

The SCLP-simplex is initiated by solving (\ref{eqn.boundary}) for $x(0),q(0)$ and obtaining $B_1$, the optimal basis for 
Rates-LP$(\K_0,\J_{N+1})$.  $B_1$ is the initial optimal base sequence for small time horizons.    
Then, in analogy to Lemke's  algorithm, SCLP-simplex solves SCLP parametrically, by increasing the time horizon $\theta T$ over $0 < \theta \le 1$, with iterations needed at $0<\theta_1 <\cdots <\theta_M =1$.
 In each validity range $\theta_{\ell-1} < \theta < \theta_\ell$,  the optimal solution is defined by an optimal base sequence $B_1,\ldots,B_N$,  and the 
  values of $x^n,q^n,\tau_n$ are affine functions of $\theta$ with well defined derivatives $\delta(\cdot) = \frac{d(\cdot)}{d\theta}$. 
  At $\theta_\ell$ {\em a collision} occurs:  either one or several intervals or else a primal or a dual state variable shrink to zero at a breakpoint $t_n$, and an SCLP-pivot is performed to obtain the optimal base sequence in the next validity range.
  
The main difference is that in Lemke's LP   at each step, only the one basis defines the optimal solution, while in  SCLP-simplex  the optimal solution consists of the base sequence.  
Each iteration of SCLP-simplex consists of two steps: calculation of the validity range, and  SCLP-pivot.
{\newline -- \em Calculation of the validity range},  $\theta_\ell$
\begin{equation}
\label{eqn.parameterlimit}
\textstyle
\theta_\ell = \theta_{\ell-1} +  \inf_{\delta(\cdot) < 0} \left\lbrace\theta : -\frac{\tau_n}{\delta \tau_n},   
 -\frac{x^n_k}{\delta x^n_k},  -\frac{q^n_j}{\delta q^n_j}\right\rbrace.  
\end{equation}
We then obtain the type of collision, interval shrinking or state variable shrinking, the location of the collision, $t_n$, and the bases on both sides to the collision, $B',B''$.
{\newline -- \em SCLP-pivot},  consists of the the following:
\begin{compactitem}[-]
\item
 If intervals shrunk to 0, remove bases between $B',B''$.   If $B', B''$ are adjacent, you found the new base sequence.
 \item
 Otherwise, $B'\setminus B'' = \{v_1,v_2\}$, choose proper $v',v'' \in  \{v_1,v_2\}$, and solve Rates-LP$(\K^*, \J^*)$, where $\K^* = \{k: \dx_k \in B'\}\setminus v'', \J^* = \{j: u_j \not\in B''\}\cup v'$  to obtain basis $D$.  If $D$ is adjacent to $B',B''$, insert it between $B',B''$,  you found the new base sequence.
 \item
Otherwise, formulate a subproblem, which is an SCLP of smaller dimension, and solve it to obtain an optimal base sequence $D_1,\ldots,D_L$, and insert it between $B',B''$,
you found the new base sequence.
  \end{compactitem}
 For collisions at 0 or $T$ the steps of the pivot are slightly modified.
\section{Revised SCLP-simplex Algorithm}
\label{sec.implementation}
SCLP-simplex \cite{weiss:08} will always work perfectly under the following conditions:  The problem needs to be non-degenerate, and all calculations need to be done with perfect accuracy.  
However,  previous implementation, that was intended only as a pilot for concept verification, used Matlab with  floating point  calculations
and was vulnerable to  degeneracy and inaccuracies.  
Moreover straightforward implementation suffered from memory and performance issues, that substantially slowed down the algorithm. As a result only problems with $K+L+J+I \le 100$ could be solved before the program crashed or ran out of time.

To improve performance and numerical stability of the SCLP-simplex algorithm we thoroughly analyzed each  step and  developed the 
{\em revised SCLP-simplex} algorithm. Python implementation of the algorithm is available  at GitHub \url{https://github.com/IBM/SCLPsolver}.

The following analysis and implementation enhancements led to substantial performance gains.
\begin{compactitem}[-]
\item
Base sequence representation is one of the problematic points of the SCLP-simplex: the  choice to store only the  indexes of the basic variables, and re-solve all Rates-LP fresh at each iteration requires an impractical  amount of computations, and storage of the simplex dictionaries for all bases involves memory issues, in storage and in updating. Since all bases are adjacent our algorithm stores simplex dictionaries only for some of them, keeping the list of pivots between all bases.
This requires more computations when a new basis ($D$) needs to be calculated, but drastically reduces the required memory.  The code obtains the available RAM and adjusts the number of stored dictionaries accordingly, maintaining  evenly spaced dictionaries.
\item
Values of all $\dx, \dq$ are kept since only a small part of them is updated during the SCLP pivot.
\item
Equations (\ref{eqn.breakponts}) are re-structured to increase sparsity. The resulting system is solved for $\delta \tau(\theta_\ell)$ using LU factorization, while $\tau(\theta_\ell)$ is calculated as $\tau(\theta_{\ell-1}) + \delta \tau(\theta_{\ell-1})$.
In a case when exactly one interval shrinks to $0$ and is replaced by a single interval, the corresponding SCLP pivot changes only one column of coefficients in (\ref{eqn.breakponts}).
In this case we use product form of inverse (PFI) to calculate $\delta \tau(\theta_\ell)$ from $\delta \tau(\theta_{\ell-1)}$ using LU decomposition obtained in the previous iteration. Such design reduces the number of operations from $\bigO(N^3)$ to $\bigO(N)$ complexity.
\item
To evaluate $\theta_\ell - \theta_{\ell-1}$ in (\ref{eqn.parameterlimit}) we use only the values of $\delta \tau_n, \tau_n$ and of $x^n, q^n$ at local minima where $\delta x^n,\delta q^n$ are negative. This allows us to solve only part of  (\ref{eqn.othervalues}). For this purpose we keep a list of all the  local minima of $x(t),q(t)$.  Updating this list  involves  only small changes in each iteration.
\item
The basis $D$ is calculated by pivoting the simplex dictionary of $B'$ or $B''$ using non-standard pivoting rules, where entering and leaving variables are determined by $B'' \setminus B'$ and $B' \setminus B''$ and by the type of the collision.  If and optimal $D$ which is adjacent to both $B', B''$ exists, it is always found  by this  single pivot.
\item
To calculate $B'' \setminus B'$ and $B' \setminus B''$ we use the list of pivots, and avoid computationally expensive set differences.
\item
The subproblems are solved with reduced dimension, and are therefore quite small.
\end{compactitem}

Numerical pitfalls lead to poor stability of the naive implementation that depends on non-degeneracy of all dictionaries, and on correct identification of  the collision types.  In theory, perturbation of the data, in particular of $a$ and $c$ can achieve all the required non-degeneracy, and ensure unique execution of all the steps of the algorithm.  However, in practice, accumulation of numerical errors due to floating point operations may impede such clean runs.
In most cases numerical problems arise when we should decide if a value is $0$ or just a small floating point number or when we should compare close floating point numbers to determine the sequence of intervals that shrink to $0$.  
Numerical instabilities lead to incorrect collision classification which may not be recognized at the classification time, but will appear in further steps.  This creates situations that are impossible in  theory, but do occur in practice, such as
\begin{compactitem}[-]
\item
Incorrect collision: $\tau_{n},\dots,\tau_{n'}$ shrink to $0$ in the middle of the base sequence, but $|B_{n-1} \setminus B_{n'+1}| > 2$.
\item
Incorrect pivot: 
the new basis $D$ is adjacent to $B'$ and $B''$ but values of $\dx_k$ and/or $\dq_j$ for this basis leading to a discontinuity in the state variables.
\item
Incorrect subproblem formulation: during the solution of a  subproblem the base sequence $D_1,\dots, D_L$ arrives at an infeasible or unbounded basis $D_l$, or the subproblem parametric line reaches $\theta=1$, but $D_1$ and $B'$ or $D_L$ and $B''$ are not adjacent.
\item
Zero lengths interval shrink: a new interval obtained at the previous iteration shrinks, introducing infinite loop, where the parametric line is not moving forward, or a number of zero lengths intervals shrink, which impedes the collision classification. 
\end{compactitem}
In these cases we return to the classification step, then change the numerical tolerances and reclassify the collision. Once the problem is resolved, tolerances are readjusted. 
In addition, starting from an  iteration where the collision classification was not clear, we store information that allows us to go back. If the reclassification fails, we go to an earlier iteration trying to reclassify the collision there.

\section{Applications and Experimental Setup}
\label{sec.setup}

The original motivation for SCLP was to define tractable optimization models for the {\em job shop scheduling problems} \cite{anderson:81}. For example, {\em micro-chips wafer fabricalion:}  starting  as a wafer of pure silicon crystal, the wafer undergoes up to 1200 operations revisiting a set of up to 60 workstations, in a re-entrant line production process, to produce several hundred computer chips, in a cycle time of some six weeks.  The problem is to control the movement of some 60,000 wafers over a time horizon of 6 weeks, with work in process value of $200\times 10^6$\$.  A rich literature on this problem includes \cite{vanZant:13,chen-etal:88,wein:88,kumar:93}.  Specifically, solution of SCLP  to these problems is described in \cite{luo-bertsimas:98,bertsimas-nasrabadi-paschalidis:14} with small to medium size examples. This application motivates our first experiment solving problems of the full size.
\newline -- {\em Transient control of multi-class queueing networks:}  
Items of several types arrive at the system, and need to follow individual paths through various service stations, and we need to control admissions, routing and sequencing items throughout the system \cite{harrison:88,wein:92,kelly-laws:93,dai:95,bramson:08,meyn:08}.  In the second experiment we test the revised SCLP-algorithm to solve the SCLP formulation of this problem enabling asymptotically optimal control of the stochastic system \cite{nazarathy-weiss:09}.

\cite{bertsimas-nasrabadi-paschalidis:14} have shown that {\em robust optimization} of both problems can also be formulated as SCLP. The formulation increases the problem sizes even further. Moreover, many additional application benefit from the proposed algorithm, e.g., {\em health systems}, where the flow of patients through emergency rooms, hospitalization, operating theaters, requires concerted use of resources, and patients follow complex paths through the system, see \cite{GansKM:03,mandelbaum-momcilovic:12}; {\em the quickest evacuation problem} for evacuation of stadiums, convention centers, amusement parks; and even the standard {\em maximum flow problem} over time usually solved via discretization of time can be easily solved as an SCLP.  

We chose two types of SCLP problems to evaluate the performance of our revised SCLP-simplex:  SCLP for a re-entrant line that approximates the wafer fab job shop scheduling problem, and SCLP for transient control of a multi-class queueing network, that approximates a stochastic service system.

{\em The re-entrant line SCLP:}  
The  state variable $x_k(t)$ is the quantity of wafers waiting to complete production step $k$, also referred to as contents of buffer $k$. 
The control variable $u_k(t)$  is the processing capacity allocated to buffer $k$.
The objective is:
\[
\textstyle
\min_{u(t),x(t)} V =  \int_0^T ( h\Tt x(t) + g\Tt u(t)) dt 
\]
where $h$ is the vector of holding costs, and $g$ the vector of operating costs.  
$G=\left[\begin{smallarray}
{ccccc} 
1  & 0 & 0 & \ldots & 0 \\
-1 & 1 & 0 & \ldots & 0 \\
0 & -1 & 1 & \ldots & 0 \\
\vdots & \ddots& \ddots &\ddots  & \vdots \\
0 &\ldots &0 & -1 & 1 
 \end{smallarray}  \right]$
represents the re-entrant line, with items moving from buffer 1 to 2 etc, until  $K$ and  out.
$H$  is the resource constraints matrix where rows $i=1,\ldots,I$ represent workstations, and $H_{i,k} = m_k$ where $m_k$ is the processing time per item at production step $k$ performed at workstation $i$.
The vector $b$ is all 1's, as capacity of workstation $i$.
Initial inventory in buffer $k$ is $\alpha_k$,   $a_k$ is the rate of   wafers input to buffer $k$, often $a_1>0$, and all other $a_k=0$.
To conform to (\ref{eqn.PWSCLP}) we substitute  $x(t) = \alpha+a t - \int_0^t Gu(s)ds$ in the objective to obtain a surrogate equivalent maximization objective, where $\gamma = -g$ and $c= h\Tt G$.  For this problem $F$ and $d$ of (\ref{eqn.PWSCLP}) are nil.

{\em Transient control MCQN:}   queues $k=1,\ldots,K$ have $x_k(t)$  discrete  items waiting for processing.  Activity $j=1,\ldots,J$ will process an item from $k=k(j)$ for a random time with mean $m_j$, and then route the completed item to queue $l$ with probability $p^j_{k(j),l}$, or send it out of the system with probability $1-\sum_l p^j_{k(j),l}$.  To do so it will use workstation $i=s(j)$.   Items arrive at queue $k$ in a random stream, at rate $a_k$.   We wish to decide which activities to employ at each time unit, so as to minimize  the expected sojourn times  or equivalently 
the (weighted) queue lengths,  starting from some initial $x_k(0)$, over finite time horizon $[0,T]$.  The SCLP approximation has 
\begin{compactitem}[$-$]
\item
$G$ is $K\times J$ with elements:  $G_{l,j}=\left\{\begin{smallarray}{ll} 1, &\; l=k(j) \\  -p^j_{k(j),l}, &\; l \ne k(j)  \end{smallarray} \right.$.
\item
$H$ is $I\times J$ with elements $H_{i,j} =  \left\{\begin{smallarray}{ll} m_j, &\; s(j)=i \\  0,  &\;  s(j) \ne i   \end{smallarray} \right.$.
\end{compactitem}

\section{Computational Results} 
\label{sec.results}

We compare our implementation of SCLP-simplex vs. LP discretization with up to 1000 time intervals. 
for the  two problem classes presented in Section \ref{sec.setup}. 
For each class we defined five settings with different amounts of servers, buffers, and job classes and randomly generated 10 problems. Randomly generated parameters include initial fluids, arrival rates, processing rates and time horizons. Their probability distributions, were chosen to avoid trivial or degenerate situation.
All experiments were performed on Lenovo ThinkPad W541 notebook computer with Intel Core i7-4810MQ processor and 16GB RAM running Windows 10. SCLP-simplex ran on Python 3.7.7 with NumPy 1.18.1, linked to Intel MKL. Some vector and matrix operations parallelized using Cython and Intel OpenMP library. The discretized problems were solved by IBM Ilog Cplex Optimization Studio 12.10 using a barrier algorithm that showed the best performance for these problems. Both algorithms ran on eight cores.

\subsection*{LP discretization}
Naive discretization may produce a quadratic number of unnecessary non-zero coefficients in the LP problem. Here we show a much more efficient discretization method  to obtain an approximate solution of SCLP by regular LP solvers.

We consider a uniform time partition $0=t_0, \dots, t_N=T$, where $N$ is the number of intervals and for $n=1,\dots,N:$ $t_n = t_{n-1} + \tau, \tau = T/N$.
For each time interval $n=1,\dots,N$ we define a vector of discrete controls $u[n]$ and state variables $x[n] $.  Then the discretizition of problem (\ref{eqn.PWSCLP}) without $F$ and $d$ can be represented as:
\begin{equation}
\label{eqn.PWSCLPredDiscr1}
\arraycolsep=2pt
\begin{array}{ll}
\displaystyle \min_{u,x}  & V_{LP} = \sum_{n=1}^N (\tau g\Tt u[n] + 0.5\, h\Tt (x[n] + x[n \mbox{-} 1]) 
\nonumber  \\
s.t.&  \tau G\, u[n] + x[n] - x[n \mbox{-} 1] = a \tau \quad \forall n \label{ctrl2} \\
&\mbox{with} \qquad  x[0] =   \alpha         \nonumber \\
& H \cdot u[n] \le b,  \qquad u[n],x[n]\ge 0, \quad \forall n.  \nonumber
\end{array}
\end{equation}

\subsection*{Results}
The objective value $V$ provided by SCLP-simplex is the accurate theoretical minimum, the   {\em relative error} is measured as $(V_{LP} - V) / V$. To compare performance, we measure {\em relative time} as the ratio of  run times of CPLEX over SCLP-simplex. 
The results for  re-entrant line and NCQN problems are shown in Table~\ref{table:reentrant}, Fig. \ref{fig:reentrant_time}, \ref{fig:reentrant_error} and Table~\ref{table:MCQN}, Fig. \ref{fig:MCQN_time}, \ref{fig:MCQN_error}, respectively. 
\begin{table}[H]
\caption{Results on re-entrant line problems}
\label{table:reentrant}
\scalebox{0.68}{
\begin{tabular}{@{\hspace{0.1cm}}@{\hspace{0.1cm}}r@{\hspace{0.1cm}}@{\hspace{0.1cm}}r@{\hspace{0.1cm}}@{\hspace{0.1cm}}r@{\hspace{0.1cm}}@{\hspace{0.1cm}}r@{\hspace{0.1cm}}@{\hspace{0.1cm}}r@{\hspace{0.1cm}}@{\hspace{0.1cm}}r@{\hspace{0.1cm}}@{\hspace{0.1cm}}r@{\hspace{0.1cm}}@{\hspace{0.1cm}}r@{\hspace{0.1cm}}@{\hspace{0.1cm}}r@{\hspace{0.1cm}}@{\hspace{0.1cm}}r@{\hspace{0.1cm}}@{\hspace{0.1cm}}r@{\hspace{0.1cm}}@{\hspace{0.1cm}}r@{\hspace{0.1cm}}}
\hline
\multirow{3}{*}[-15pt]{\rotatebox[origin=c]{90}{Servers}} & \multirow{3}{*}[-15pt]{\rotatebox[origin=c]{90}{Buffers}} & \multirow{3}{*}{\rotatebox[origin=c]{90}{Time horizon }}  & \multicolumn{3}{@{\hspace{0.1cm}}c@{\hspace{0.1cm}}}{Average numbers} &  \multicolumn{6}{@{\hspace{0.1cm}}c@{\hspace{0.1cm}}}{Discretization} \\ \cline{4-12} 
 &  &  & \multirow{3}{*}{\parbox{0.9cm}{\centering Run time, sec}} & \multirow{3}{*}{Steps} & \multirow{3}{*}[-5pt]{\rotatebox[origin=c]{90}{Intervals }} &  \multicolumn{2}{{@{\hspace{0.1cm}}c@{\hspace{0.1cm}}}}{10} & \multicolumn{2}{{@{\hspace{0.1cm}}c@{\hspace{0.1cm}}}}{100} & \multicolumn{2}{{@{\hspace{0.1cm}}c@{\hspace{0.1cm}}}}{1000} \\ \cline{7-12}\noalign{\smallskip}
 &  &  &  &  &  & \multicolumn{2}{{@{\hspace{0.1cm}}c@{\hspace{0.1cm}}}}{Relative:} & \multicolumn{2}{{@{\hspace{0.1cm}}c@{\hspace{0.1cm}}}}{Relative:} & \multicolumn{2}{{@{\hspace{0.1cm}}c@{\hspace{0.1cm}}}}{Relative:} \\
 &  &  &   &  &  & error & time & error & time & error & time \\[0.55cm] \hline
20 & 400 & 600 & 3.352 & 921.7 & 440.8 & 4.87 & 0.0625 & 0.400 & 0.763 & 0.022 & 12.449\\ 
30 & 600 & 900 & 7.842 & 2244.7 & 667.8 &  11.36 & 0.0289 & 1.013 & 0.336 & 0.067 & 7.105\\ 
40 & 800 & 1200 & 16.570 & 3109.3 & 883.8 & 13.54 & 0.0165 & 1.236 & 0.214 & 0.090 & 4.454\\ 
50 & 1000 & 1500 & 34.753 & 4402.3 & 1113.4 & 19.97 & 0.0099 & 1.866 & 0.125 & 0.146 & 2.429\\ 
60 & 1200 & 1800 & 67.419 & 5699.0 & 1322.9 & 23.10 & 0.0066 & 2.183 & 0.081 & 0.176 & 1.617\\    \hline
\end{tabular}
}
\end{table}
\begin{figure}[H]
    \centering
    \begin{minipage}{0.47\columnwidth}
        \centering
        \includegraphics[scale=0.37]{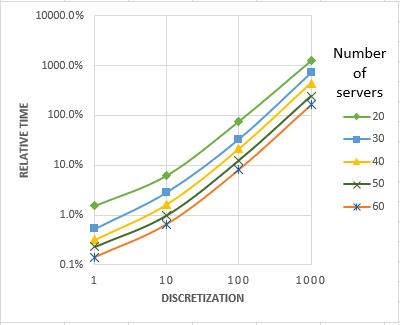}
  	\caption{Re-entrant line discretization relative time}
  	\label{fig:reentrant_time}
    \end{minipage}\hfill
    \begin{minipage}{0.47\columnwidth}
        \centering
       \includegraphics[scale=0.37]{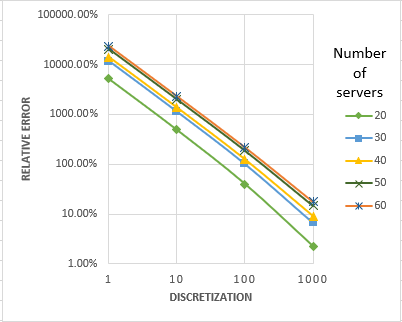}
  		\caption{Re-entrant line discretization relative error}
  		\label{fig:reentrant_error}
    \end{minipage}
\end{figure}
  For both problems, low discretization leads to non-optimal solutions with large relative errors while high discretization is resource-thirsty with long run times. We remark, that Cplex provide almost constant 100\% load on all processor cores, while for the revised SCLP-simplex implementation the load of all cores is not constant with 50\% average load for all cores.  
 This indicates that we may be able to further improve SCLP-simplex by exploiting more parallelization.
\begin{table}[H]
\caption{Results on MCQN problems}
\label{table:MCQN}
\scalebox{0.68}{
\begin{tabular}{@{\hspace{0.1cm}}@{\hspace{0.1cm}}r@{\hspace{0.1cm}}@{\hspace{0.1cm}}r@{\hspace{0.1cm}}@{\hspace{0.1cm}}r@{\hspace{0.1cm}}@{\hspace{0.1cm}}r@{\hspace{0.1cm}}@{\hspace{0.1cm}}r@{\hspace{0.1cm}}@{\hspace{0.1cm}}r@{\hspace{0.1cm}}@{\hspace{0.1cm}}r@{\hspace{0.1cm}}@{\hspace{0.1cm}}r@{\hspace{0.1cm}}@{\hspace{0.1cm}}r@{\hspace{0.1cm}}@{\hspace{0.1cm}}r@{\hspace{0.1cm}}@{\hspace{0.1cm}}r@{\hspace{0.1cm}}@{\hspace{0.1cm}}r@{\hspace{0.1cm}}}
\hline
\multirow{3}{*}[-15pt]{\rotatebox[origin=c]{90}{Servers}} & \multirow{3}{*}[-15pt]{\rotatebox[origin=c]{90}{Buffers}} & \multirow{3}{*}{\rotatebox[origin=c]{90}{Time horizon }}  & \multicolumn{3}{@{\hspace{0.1cm}}c@{\hspace{0.1cm}}}{Average numbers} &  \multicolumn{6}{@{\hspace{0.1cm}}c@{\hspace{0.1cm}}}{Discretization} \\ \cline{4-12} 
 &  &  & \multirow{3}{*}{\parbox{0.9cm}{\centering Run time, sec}} & \multirow{3}{*}{Steps} & \multirow{3}{*}[-5pt]{\rotatebox[origin=c]{90}{Intervals }} &  \multicolumn{2}{{@{\hspace{0.1cm}}c@{\hspace{0.1cm}}}}{10} & \multicolumn{2}{{@{\hspace{0.1cm}}c@{\hspace{0.1cm}}}}{100} & \multicolumn{2}{{@{\hspace{0.1cm}}c@{\hspace{0.1cm}}}}{1000} \\ \cline{7-12}\noalign{\smallskip}
 &  &  &  &  &  & \multicolumn{2}{{@{\hspace{0.1cm}}c@{\hspace{0.1cm}}}}{Relative:} & \multicolumn{2}{{@{\hspace{0.1cm}}c@{\hspace{0.1cm}}}}{Relative:} & \multicolumn{2}{{@{\hspace{0.1cm}}c@{\hspace{0.1cm}}}}{Relative:} \\
 &  &  &   &  &  & error & time & error & time & error & time \\[0.55cm] \hline
20 & 200 & 100 & 0.988 & 663 & 271 & 1.244 & 1.027 & 0.049 & 12.494 & 0.0013 & 212.83\\ 
40 & 400 & 100 & 4.817 & 1887 & 535 & 0.893 & 1.395 & 0.034 & 18.770 & 0.0009 & 448.42\\ 
60 & 600 & 100 & 17.098 & 3899 & 815 &  0.913 & 1.242 & 0.036 & 19.992 & 0.0009 & 946.59 \tablefootnote{Solution of one of the problems with 60 servers and 600 buffers took 131211 sec. Excluding this problem the average relative time becomes 484.46. Discretization to 1000 intervals is currently infeasible for larger MCQN problems.}\\ 
80 & 800 & 100 & 41.655 & 6424 & 1080 & 1.017 & 1.217 & 0.039 & 23.967 &  & \\ 
100 & 1000 & 100 & 91.809 & 9466 & 1356 & 0.922 & 1.051 & 0.036 & 15.809 &  & \\ 
\hline
\end{tabular}
}
\end{table}
\begin{figure}[H]
    \centering
    \begin{minipage}{0.47\columnwidth}
        \centering
         \includegraphics[scale=0.37]{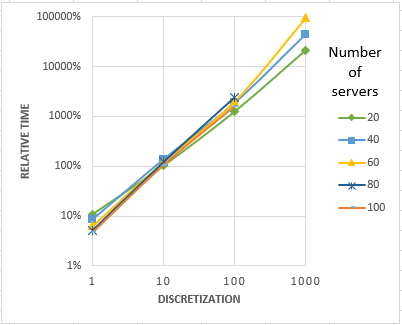}
  \caption{MCQN discretization relative time}
  \label{fig:MCQN_time}
    \end{minipage}\hfill
    \begin{minipage}{0.47\columnwidth}
        \centering
  \includegraphics[scale=0.37]{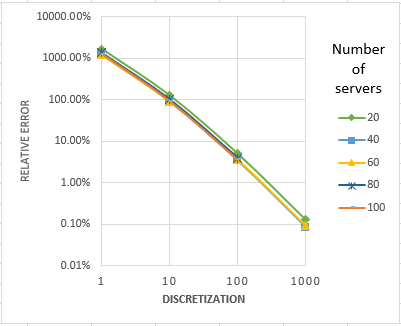}
  \caption{MCQN discretization relative error}
  \label{fig:MCQN_error}
    \end{minipage}
\end{figure}

\subsection*{Discussion}
To check our conjecture on linear empirical complexity similarly to Lemke's algorithm for LP, we compare the number of iterations $M$ (steps) to the problem dimensions that are usually expressed by the number of variables $(2K)$ and the number of constraints $(K+I)$.  Our results show that the number of steps is proportional to $2K \cdot (K+I)$ and  even decreasing with the problem size: the number of steps normalized by $2K \cdot (K+I)$ is in a range of $[1.9 \dots 3.3]\cdot 10^{-3}$ and $[4.3 \dots 7.5] \cdot 10^{-3}$ for re-entrant line and MCQN problems, respectively, where the lower values happen for larger settings in both problem classes.

%
%

\section{Summary and Future Directions}
\label{sec.impact}
To summarize, we have shown that SCLP-simplex is viable for re-entrant line and control of queueing networks problems.  It indicates that our implementation  opens new opportunities in optimizing important classes of problems, as listed in Section \ref{sec.setup}.  
For all these applications we stress the important advantage of the SCLP-simplex  in enabling us to do sensitivity analysis.  
In addition, it can be easily adjusted to  online environments, e.g., with rolling time horizon or model predictive control. Solution for a new period $[t_0, T+t_0]$ could be obtained by truncating the solution for time horizon $T$ at $t_0$ and then re-solving the problem starting from the truncated solution by increasing the time horizon from $T-t_0$ up to $T$ through its regular parametric line. During numerical experiments we found that the number of iterations of SCLP-simplex decreases exponentially with the growth of the time horizon that may be especially useful in these settings. On the other hand, in many cases the discretized LP model will need to be solved for the whole new time horizon from scratch since solutions of previous iterations may be infeasible for the new period.

There is a wide scope for further research and development of the continuous-time SCLP-simplex algorithms:
\begin{compactitem}[-]
\item
{\em Measure valued SCLP:}  while strong duality may fail in SCLP, formulation in the space of measures 
rather than the space of densities achieves strong duality, by allowing impulse controls at 0 and $T$, see \cite{shindin-weiss:14duality,shindin-weiss:15form,shindin-weiss:18simplex}.
\item
{\em Piecewise constant data:}  can be solved similarly to \cite{shindin-weiss:18simplex}.
\item
{\em Continuous fractional programming:} can be formulated as SCLP.
\item
{\em Maximum flow over time with loses and arc delays:} as formulated and discussed  in 
\cite{pullan:97arcdelay,gross-skutella:11} 
is a challenging problem for which we may be able to characterize optimal solutions.  
\item
{\em Piecewise analytic objective and right hand side:}  these models were discussed in \cite{pullan:95forms, pullan:96duality}, and it may be possible to define a simplex-type algorithm for them.
\item
{\em General continuous linear programs:} as formulated by Bellman \cite{bellman:53} seem to be of a different nature than SCLP, and present a challenging area of future research. 
\end{compactitem}

\bibliography{SCLPbibliography} 
\bibliographystyle{ieeetr}      

\end{document}